\documentclass{amsart}   	
\usepackage{lineno,hyperref,verbatim, xcolor}
\modulolinenumbers[5]

\usepackage{geometry}                		
\usepackage{amsmath}
\usepackage{enumerate}
\usepackage{amsthm}	
\usepackage{amssymb}
\usepackage{bbold}
\usepackage{tikz}
\usepackage[toc]{appendix}
\newtheorem{thm}{Theorem}[section]

\newtheorem{rem}[thm]{Remark}
\newtheorem{prop}[thm]{Proposition}

\newtheorem{lem}[thm]{Lemma}

\newtheorem*{claim}{Claim}

\newtheorem*{thm*}{Theorem}
\newtheorem*{question*}{Question}
\newtheorem*{assumption*}{Assumption}

\theoremstyle{definition}
\newtheorem{example}[thm]{Example}

\newcommand{\F}{\mathbb{F}}
\newcommand{\fl}{\mathfrak{l}}
\newcommand{\fp}{\mathfrak{p}}
\newcommand{\Gal}{\mathrm{Gal}}
\newcommand{\Aut}{\mathrm{Aut}}
\newcommand{\sep}{\mathrm{sep}}
\newcommand{\alg}{\mathrm{alg}}
\newcommand{\GL}{\mathrm{GL}}
\renewcommand{\ss}{\mathrm{ss}}


\title{Galois criterion for torsion points of Drinfeld modules}
\author{Chien-Hua Chen}

\begin{document}
\begin{abstract}
In this paper, we formulate the Drinfeld module analogue of a question raised by Lang and studied by Katz on the existence of rational points 
on abelian varieties over number fields. Given a maximal ideal $\fl$ of $\F_q[T]$, the question 
essentially asks whether, up to isogeny, a Drinfeld module $\phi$ over $\F_q(T)$ contains a rational $\fl$-torsion point if the reduction of $\phi$ at almost all primes of $\F_q[T]$ contains a rational $\fl$-torsion point. Similar to the case of abelian varieties, we show that 
the answer is positive if the rank of the Drinfeld module is $2$, but negative if the rank is $3$. Moreover, for rank $3$ Drinfeld modules 
we classify those cases where the answer is positive. 
\end{abstract}
\maketitle

\section{Introduction}

Let $l$ be a rational prime and $A$ an abelian variety over a numder field $K$. Let $S$ be a set of primes of $K$ of density $1$ where 
$A$ has good reduction. For $\fp\in S$ denote by $A_\fp$ the reduction of $A$ at $\fp$ and by $\F_\fp$ the residue field at $\fp$. 
In \cite{Kat81}, N. Katz proved the following: if $A_\fp(\F_\fp)[l]\neq 0$ for all $\fp\in S$, then there exists an abelian variety $A'$ isogenous 
to $A$ over $K$ such that $A'(K)[l]\neq 0$, provided $\dim A'\leq 2$. He also constructed counterexamples in all dimensions $\geq 3$.   
This answered a question posed by Serge Lang. 

The analogy between Drinfeld modules and abelian varieties is well-known and has been extensively studied. In this paper we consider the 
analogue of Lang's question for Drinfeld modules. Before we state our main results, we note that Lang's question can be reformulated 
as follows. Let 
$$\bar{\rho}_l:\Gal(\bar{K}/K)\to \Aut(A[l])
$$ 
be the representation arising from the action of the absolute Galois group of $K$ on $A[l]$. If for every $\sigma\in \Gal(\bar{K}/K)$  
we have $\det(1-\bar{\rho}_l(\sigma))=0$, is it true that the semi-simplification of $A[l]$ contains the trivial representation? 
In fact, Katz approaches Lang's question from this group-theoretic perspective.

Now let $A=\F_q[T]$ be the ring of polynomials in indeterminate $T$ with coefficients in $\F_q$, 
where $q$ is a power of a prime $p\geq 5$. Let $F=\F_q(T)$ be the 
field of fractions of $A$. Let $\phi$ be a Drinfeld $A$-module of rank $r$ over $F$. Let $S$ be a set of maximal ideals of $A$ of density $1$ where 
$\phi$ has good reduction. Let $\F_\fp=A/\fp$ denote the residue field at $\fp$ and $\phi\otimes \F_\fp$ the reduction of $\phi$ 
at $\fp\in S$. Given a maximal ideal $\fl$ of $A$, 
the absolute Galois group $G_F:=\Gal(F^\sep/F)$ of $F$ acts on $\phi[\fl]$. Let 
$$
\bar{\rho}_{\phi, \fl}: G_F\to \Aut(\phi[\fl])\cong \GL_r(\F_\fl)
$$  
be the corresponding representation. 

The main results of this paper are the following:

\begin{thm}\label{mainthm1} Assume $r=2$. Let $\fl$ be a maximal ideal of $A$. 
 Suppose for all $\fp\in S$ the reduction $\phi\otimes \F_\fp$ has a nontrivial $\fl$-torsion point defined over $\F_\fp$. 
 Then the semisimplification of the $G_F$-module $\phi[\fl]$ contains the trivial representation. 
\end{thm}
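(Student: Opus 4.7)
The plan is to follow Katz's group-theoretic approach, which in the rank $2$ Drinfeld setting reduces to a clean application of Chebotarev density together with the Brauer-Nesbitt theorem. First, I would translate the hypothesis about reductions into a uniform statement about $\bar{\rho}_{\phi,\fl}$. For any $\fp \in S$ with $\fp \neq \fl$, the Drinfeld analogue of the N\'eron-Ogg-Shafarevich criterion guarantees that $\fp$ is unramified in $F(\phi[\fl])/F$; the Frobenius $\mathrm{Frob}_\fp \in G_F$ acts on $\phi[\fl]$ via $\bar{\rho}_{\phi,\fl}$, and the $\F_\fp$-rational $\fl$-torsion of $\phi\otimes \F_\fp$ is precisely the fixed subspace of $\bar{\rho}_{\phi,\fl}(\mathrm{Frob}_\fp)$. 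The hypothesis thus becomes $\det(1 - \bar{\rho}_{\phi,\fl}(\mathrm{Frob}_\fp)) = 0$ for $\fp$ in a density-$1$ set. Applying Chebotarev density to the finite Galois extension $F(\phi[\fl])/F$, the Frobenius classes exhaust the image $G := \bar{\rho}_{\phi,\fl}(G_F) \subseteq \GL_2(\F_\fl)$, so every $g \in G$ has $1$ as an eigenvalue.

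Next, I would exploit $r=2$ to pin down the characteristic polynomial of each element completely. Setting $\chi := \det \bar{\rho}_{\phi,\fl} : G_F \to \F_\fl^\times$, the fact that $1$ is an eigenvalue of $\bar{\rho}_{\phi,\fl}(\sigma)$ forces the other eigenvalue to be $\chi(\sigma)$, so the characteristic polynomial of $\sigma$ acting on $\phi[\fl]$ equals
\[
(X-1)(X-\chi(\sigma)) = X^2 - (1+\chi(\sigma))X + \chi(\sigma),
\]
which coincides with the characteristic polynomial of $\sigma$ on the two-dimensional semisimple representation $\mathbf{1} \oplus \chi$.

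Finally, I would invoke the Brauer-Nesbitt theorem: over any field, two finite-dimensional semisimple representations of a group whose elements have matching characteristic polynomials are isomorphic. Hence the semisimplification of the $G_F$-module $\phi[\fl]$ is isomorphic to $\mathbf{1} \oplus \chi$, and in particular contains the trivial representation as a direct summand.

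The key subtlety is that we work in characteristic $p$, where equality of traces alone does not determine a semisimple modular representation, so the characteristic-polynomial form of Brauer-Nesbitt is essential. This is exactly what $r=2$ provides: the characteristic polynomial has only two non-leading coefficients, and both are pinned down by the ``eigenvalue $1$'' condition together with knowledge of $\det \bar{\rho}_{\phi,\fl}$. Starting from $r=3$, a single eigenvalue condition only constrains one coefficient of a cubic characteristic polynomial, leaving precisely the room needed for Katz-style counterexamples; this asymmetry is what drives the dichotomy between the rank $2$ and rank $3$ results announced in the introduction.
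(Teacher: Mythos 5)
Your proposal is correct and follows essentially the same route as the paper: Chebotarev converts the hypothesis into the statement that every element of the image of $\bar{\rho}_{\phi,\fl}$ has eigenvalue $1$, hence characteristic polynomial $(X-1)(X-\det)$, and Brauer--Nesbitt then identifies $\phi[\fl]^{\ss}$ with $\mathbb{1}\oplus\det\bar{\rho}_{\phi,\fl}$. The only cosmetic difference is the translation step: the paper deduces $\fl\mid P_{\phi,\fp}(1)$ from Gekeler's result (Proposition \ref{red}) on the $A$-module structure of $^{\phi\otimes\F_\fp}\F_\fp$, whereas you use the reduction isomorphism on $\fl$-torsion together with the fact that $\F_\fp$-rational torsion is the Frobenius-fixed subspace; both are standard and yield the same Frobenius condition.
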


\begin{thm}\label{mainthm2} Assume $r=3$. 
	Let $\fl$ be a maximal ideal of $A$. 
	Suppose for all $\fp\in S$ the reduction $\phi\otimes \F_\fp$ has a nontrivial $\fl$-torsion point defined over $\F_\fp$. 
\begin{itemize}
 \item[(1)] If $\det(\bar{\rho}_{\phi, \fl})$ is a nontrivial representation, then the semisimplification of $\phi[\fl]$ contains the trivial representation.
 \item[(2)] If $\det(\bar{\rho}_{\phi, \fl})$ is the trivial representation and $\phi[\fl]$ is reducible, then either the semisimplification of $\phi[\fl]$ contains the  trivial representation or there is a quadratic twist $\phi'$ of $\phi$ such that the semisimplification of $\phi'[\fl]$ contains the trivial representation. Moreover, there do exist Drinfeld modules $\phi$ for which 
 $\det(\bar{\rho}_{\phi, \fl})$ is the trivial representation, $\phi[\fl]$ is reducible, but 
the semisimplification of $\phi[\fl]$ does not contain the trivial representation. 
 \item[(3)] There are Drinfeld modules $\phi$ which satisfy the assumptions of the theorem but for which $\phi[\mathfrak{l}]$ is irreducible, so 
 its semisimplification does not contain the trivial representation.
\end{itemize}
\end{thm}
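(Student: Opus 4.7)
The plan is to translate the hypothesis into a purely group-theoretic condition on the image $G:=\bar\rho_{\phi,\fl}(G_F)\subseteq\GL_3(\F_\fl)$ and then perform a case analysis on the composition factors of $V:=\phi[\fl]$. By Chebotarev density applied to the finite quotient of $G_F$ cut out by $\bar\rho_{\phi,\fl}$, the hypothesis that $\phi\otimes\F_\fp$ has a nontrivial $\fl$-torsion point over $\F_\fp$ for every $\fp\in S$ is equivalent to the condition that every $\sigma\in G_F$ has $1$ as an eigenvalue on $V$. The characteristic polynomial of each such $\sigma$ therefore factors as
\[
(x-1)\bigl(x^{2}-(\mathrm{tr}\,\sigma-1)x+\det\bar\rho_{\phi,\fl}(\sigma)\bigr),
\]
and computing elementary symmetric functions of eigenvalues gives the trace identity $\mathrm{tr}(\sigma\mid\Lambda^{2}V)=\mathrm{tr}(\sigma\mid V)-1+\det\bar\rho_{\phi,\fl}(\sigma)$; by Brauer--Nesbitt (over $\bar\F_\fl$) this yields the isomorphism of semisimplifications
\[
\Lambda^{2}V\oplus\mathbb{1}\;\cong\;V^{\ss}\oplus\det V.
\]

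For part (1), suppose for contradiction that $V^{\ss}$ contains no trivial summand, and split by the shape of $V^{\ss}$. If $V^{\ss}$ is irreducible, then $\Lambda^{2}V\cong V^{*}\otimes\det V$ is also $3$-dimensional irreducible, so Krull--Schmidt in the above isomorphism forces $\Lambda^{2}V\cong V$ and $\det V\cong\mathbb{1}$, contradicting $\det\bar\rho_{\phi,\fl}\neq\mathbb{1}$. If $V^{\ss}=\pi\oplus\chi$ with $\pi$ irreducible of dimension $2$ and $\chi$ a nontrivial character, the identity $(1-\chi(\sigma))(1-\mathrm{tr}\,\pi(\sigma)+\det\pi(\sigma))=0$ expands via Brauer--Nesbitt to
\[
\pi\otimes\chi\oplus\mathbb{1}\oplus\det\pi\;\cong\;\pi\oplus\chi\oplus\chi\det\pi;
\]
matching the unique $2$-dimensional irreducible summand gives $\pi\otimes\chi\cong\pi$, and matching the $1$-dimensional summands forces $\chi^{2}=\mathbb{1}$ and $\det\pi=\chi$, whence $\det V=\chi\det\pi=\chi^{2}=\mathbb{1}$, a contradiction. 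Finally, if $V^{\ss}=\chi_{1}\oplus\chi_{2}\oplus\chi_{3}$ with all $\chi_{i}$ nontrivial, then $G$ is covered by the proper subgroups $\ker\chi_{i}$; since a group is never the union of two proper subgroups and by Scorza's result a cover by three proper subgroups yields a $(\mathbb{Z}/2)^{2}$-quotient, the $\chi_{i}$ factor as the three nontrivial characters of $(\mathbb{Z}/2)^{2}$, so $\det V=\chi_{1}\chi_{2}\chi_{3}=\mathbb{1}$, again a contradiction.

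For part (2), assuming $\det V=\mathbb{1}$, $V$ reducible, and no trivial summand in $V^{\ss}$, the same analysis leaves exactly the two configurations: $V^{\ss}=\pi\oplus\chi$ with $\chi$ a nontrivial quadratic character and $\det\pi=\chi$; or $V^{\ss}=\chi_{1}\oplus\chi_{2}\oplus\chi_{3}$ with the $\chi_{i}$ distinct nontrivial quadratic characters satisfying $\chi_{1}\chi_{2}\chi_{3}=\mathbb{1}$. In the first case the quadratic twist $\phi^{\chi}$ of $\phi$ by $\chi$, whose mod-$\fl$ representation is $V\otimes\chi$, satisfies $\phi^{\chi}[\fl]^{\ss}\cong\pi\otimes\chi\oplus\chi^{2}\cong\pi\oplus\mathbb{1}$; in the second, twisting by $\chi_{1}$ yields $\mathbb{1}\oplus\chi_{1}\chi_{2}\oplus\chi_{1}\chi_{3}$. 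Either way the twist contains the trivial representation, proving the dichotomy. For the ``moreover'' assertion I would construct an explicit rank-$3$ Drinfeld module $\phi_{T}=T+a_{1}\tau+a_{2}\tau^{2}+a_{3}\tau^{3}\in F\{\tau\}$ with the $a_{i}$ chosen so that the $\fl$-division polynomial factors, forcing $\phi[\fl]$ to split into three Galois-stable lines over a $(\mathbb{Z}/2)^{2}$-extension of $F$ with the three nontrivial quadratic characters acting; the eigenvalue-$1$ condition on every Frobenius is then automatic from the subgroup-union argument.

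For part (3), the model irreducible example is $A_{4}\subseteq\GL_{3}(\F_{\fl})$ acting by its standard $3$-dimensional representation, which is irreducible, self-dual, and of trivial determinant; inspection of each conjugacy class (the identity, the three double transpositions, and the eight $3$-cycles) shows every element has eigenvalue $1$. I would realize this image by choosing the coefficients $a_{i}$ of $\phi_{T}$ so that the splitting field of the $\fl$-division polynomial has Galois group $A_{4}$ acting via the standard representation. The main obstacle in both parts (2) and (3) is precisely this realization step: since generically the mod-$\fl$ image of a rank-$3$ Drinfeld module is all of $\GL_{3}(\F_{\fl})$, producing the prescribed small images $(\mathbb{Z}/2)^{2}$ and $A_{4}$ will require either a carefully chosen explicit Drinfeld module together with a direct computation of its division polynomial, or a specialization/inverse-Galois argument adapted to Drinfeld modules.
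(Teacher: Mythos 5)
Your opening reduction is the same engine the paper runs on: every $\sigma$ has eigenvalue $1$ on $V=\phi[\fl]$, hence (via the factorization with the $(x-1)$ factor) the characteristic polynomials of $\sigma$ on $\mathbb{1}\oplus\Lambda^{2}V$ and on $V\oplus\det V$ agree, and Brauer--Nesbitt gives $\mathbb{1}\oplus(\Lambda^{2}V)^{\ss}\cong V^{\ss}\oplus\det V$. (Be careful to invoke Brauer--Nesbitt with characteristic polynomials rather than traces alone; your own factorization supplies this, since the eigenvalue multisets are $\{1,\alpha,\beta,\alpha\beta\}$ on both sides, so this is only a phrasing slip.) Your part (1) is correct but far longer than necessary: since $\mathbb{1}$ occurs on the left and $\det V\neq\mathbb{1}$, the trivial representation must already be a constituent of $V^{\ss}$ -- no case analysis, Krull--Schmidt matching, or Scorza-type covering argument is needed; that machinery only becomes relevant in part (2), where your constituent-matching and subgroup-covering analysis (in place of the paper's self-duality argument $\Lambda^{2}V\cong V^{\lor}$ when $\det V=\mathbb{1}$) correctly isolates the two configurations and the twisting step goes through, granted the lemma that the twist by $\sqrt{c}$ has mod-$\fl$ representation $V\otimes\chi$ (the paper imports this from Chen--Le, Lemma 4.1; you assume it without reference).

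The genuine gap is in the existence assertions, which are half of the theorem: the ``moreover'' clause of (2) and all of (3) require actually exhibiting rank-$3$ Drinfeld modules over $F$ whose mod-$\fl$ image is the prescribed small subgroup, and your proposal stops exactly there. You name the right group-theoretic targets -- $(\mathbb{Z}/2)^{2}$ acting through its three nontrivial quadratic characters, and an irreducible subgroup of $\mathrm{SO}_3$ such as $A_4$ in which every element fixes a vector -- but then defer the realization to ``a carefully chosen Drinfeld module or an inverse-Galois argument,'' which is precisely the nontrivial step: one cannot simply tune the coefficients of $\phi_T$ and read off such an image from the division polynomial. The paper closes this gap with the Boston--Ose theorem, which realizes such representations $G_F\to\GL_3(\F_q)$ as the mod-$(T)$ representation of some rank-$3$ Drinfeld module (so in particular the constructions are carried out for $\fl=(T)$); for part (3) it moreover builds the input representation concretely, proving that $\varphi_T=T+T\tau+T^{q}\tau^{2}$ has $\bar{\rho}_{\varphi,T}(G_F)=\mathrm{SL}_2(\F_q)$ and composing with the exceptional isomorphism $\mathrm{PSL}_2(\F_q)\cong\Omega_3(\F_q)\subset\mathrm{SO}_3(\F_q)$, then checking that every element of $\mathrm{SO}_3(\F_q)$ fixes a nonzero vector (so the torsion hypothesis holds at almost all $\fp$) and that $\Omega_3(\F_q)$ acts irreducibly on $\F_q^3$. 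Your $A_4$ blueprint is sound at the level of linear algebra (irreducible, trivial determinant, every element with eigenvalue $1$, and $p\geq 5$ keeps the representation irreducible mod $\fl$), but without a realization statement of Boston--Ose type, or an explicit $\phi_T$, parts (2)-``moreover'' and (3) remain unproved in your write-up.
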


In particular, the analogue of Lang's question has a positive answer for Drinfeld modules of rank $2$, 
but can have a negative answer for Drinfeld modules of rank $3$. 


To prove the main theorems, we analyze the structure of the Galois module $\phi[\mathfrak{l}]$ following the 
strategy  in Katz's paper \cite{Kat81}. 
A crucial idea  
in our proof of part (3) of Theorem \ref{mainthm2} was inspired by Cullinan's paper \cite{Cull19}. 

To conclude the introduction, we observe the following relation of our results to the existence of rational points, up to isogeny. 
Suppose the semisimplification of $\phi[\mathfrak{l}]$ contains the trivial representation. If the trivial representation is 
is a $G_F$-submodule of $\phi[\mathfrak{l}]$ then $\phi$ has an $F$-rational $\fl$-torsion point. Otherwise, 
there is a $G_F$-submodule $H\subset \phi[\mathfrak{l}]$ such that the trivial representation is a $G_F$-submodule 
of $H':=\phi[\mathfrak{l}]/H$. In this case, 
we consider the $F$-isogenous Drinfeld module $\phi':=\phi/H$ of $\phi$ 
(see \cite{Gos96} Proposition 4.7.11 for the construction of $\phi'$). 
Then we have $H'\subset \phi'[\mathfrak{l}]$, which implies that $\phi'[\mathfrak{l}]$ has a nontrivial $F$-rational point.

\section{Preliminaries}

An {\textbf{$A$-field}} is a field $K$ equipped with a homomorphism $\gamma: A \rightarrow K$ of $\mathbb{F}_q$-algebras. 
The kernel $\ker(\gamma)$ is called the {\textbf{$A$-characteristic}} of $K$; 
we say $K$ has {\textbf{generic characteristic}} if $\ker(\gamma)=0$.

Let $K\{\tau\}$ be the ring of skew polynomials satisfying the commutation rule $\tau \cdot c = c^q\cdot \tau$.
A \textbf{Drinfeld $A$-module over $K$ of rank $r\geqslant 1$} is a ring homomorphism
\begin{align*}
\phi&: A\longrightarrow K\{\tau\} \\
      &\ \ \ a \longmapsto \phi_a=\gamma(a)+ \sum^{r\cdot {\rm{deg}}(a)}_{i=1}g_i(a)\tau^i.
\end{align*}
It is uniquely determined by $\phi_T=\gamma(T)+\sum^{r}_{i=1}g_i(T)\tau^i$, where $g_r(T)\neq0$. 
An \textbf{isogeny} from a Drinfeld module $\phi$ to another Drinfeld module $\psi$ over $K$ is a nonzero element $u\in K\{\tau\}$ such that $u\cdot \phi_a=\psi_a \cdot u$ for all $a\in A$. 

The Drinfeld module $\phi$ over $K$ gives $K$ an $A$-module structure, where $a\in A$ acts on $K$ via $\phi_{a}$. We use the notation 
$^{\phi}K$ to emphasize the action of $A$ on $K$.
The {\textbf{$a$-torsion}} is 
$$\phi[a]=\left\{ \alpha\in K^\alg\mid \phi_a(\alpha)= \gamma(a)\alpha+ \sum^{r\cdot {\rm{deg}}(a)}_{i=1}g_i(a)\alpha^{q^i}=0 \right\}.$$ 
Note that $\phi[a]$ is an $A$-module, where $b\in A$ acts on $\alpha\in \phi[a]$ by 
$b\cdot \alpha=\phi_b(\alpha)$. The following is well-known and easy to prove (cf. \cite{Gos96}): 

\begin{prop}\label{prop0.2}
Let $\phi$ be a Drinfeld module over $K$ of rank $r$ and $0\neq a\in A$. If the $A$-characteristic of $K$ does not divide $a$, then 
there is an isomorphism of $A$-modules $\phi[a]\simeq (A/aA)^r$.
\end{prop}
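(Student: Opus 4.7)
The plan is to combine a cardinality computation for $\phi[a]$ with the structure theorem for finitely generated modules over the principal ideal quotient $A/aA$. I would first regard $\phi_a(\alpha) = \gamma(a)\alpha + \sum_{i=1}^{r\deg(a)} g_i(a)\alpha^{q^i}$ as an ordinary polynomial in $\alpha$ over $K^{\alg}$ of degree $q^{r\deg(a)}$. Its formal derivative with respect to $\alpha$ is the constant coefficient $\gamma(a)$, and by hypothesis $a \notin \ker(\gamma)$, so $\gamma(a) \neq 0$ and $\phi_a$ is separable. Consequently $|\phi[a]| = q^{r\deg(a)} = |A/aA|^r$.

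Next I would reduce to the prime-power case. Factoring $aA = \prod_i \fl_i^{e_i}$ into powers of distinct maximal ideals of $A$ and applying the Chinese remainder theorem to the $A/aA$-module $\phi[a]$ yields a decomposition $\phi[a] = \bigoplus_i \phi[\fl_i^{e_i}]$ of $A$-modules, since the pieces $\phi[\fl_i^{e_i}]$ have pairwise coprime annihilators. So it suffices to prove $\phi[\fl^n] \cong (A/\fl^n)^r$ for each prime power $\fl^n$ appearing in the factorization.

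For the prime-power case I would view $\phi[\fl^n]$ as a finitely generated module over the principal ideal ring $A/\fl^n$. The structure theorem gives $\phi[\fl^n] \cong \bigoplus_{j=1}^{s} A/\fl^{k_j}$ with $1 \leq k_j \leq n$. The $\fl$-torsion of each cyclic summand $A/\fl^{k_j}$ is a one-dimensional $A/\fl$-vector space, so $|\phi[\fl]| = |A/\fl|^s$; comparing with $|\phi[\fl]| = |A/\fl|^r$ from the first paragraph yields $s = r$. The identity $\sum_j k_j = rn$ coming from total cardinality, combined with $k_j \leq n$, then forces $k_j = n$ for all $j$, giving $\phi[\fl^n] \cong (A/\fl^n)^r$. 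The only genuinely Drinfeldian input is the separability of $\phi_a$; everything else is standard commutative algebra, in keeping with the paper's remark that the result is well-known. I do not expect a serious obstacle.
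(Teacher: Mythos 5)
Your argument is correct. Note that the paper itself gives no proof of this proposition: it simply records it as well known and cites Goss's book, so there is nothing internal to compare against. Your proof is the standard textbook one (and essentially the one in the cited reference): separability of $\phi_a$ because its formal derivative is $\gamma(a)\neq 0$, giving $|\phi[a]|=q^{r\deg(a)}$; the primary decomposition of $\phi[a]$ as an $A/aA$-module via the Chinese remainder theorem; and the structure theorem for modules over $A/\fl^n$ together with the count $|\phi[\fl]|=|A/\fl|^r$ to pin down the invariant factors. The only point you use tacitly is that $\phi_a$ has $\tau$-degree exactly $r\deg(a)$, i.e.\ nonzero leading coefficient, which is what makes the root count $q^{r\deg(a)}$ exact; this follows from the fact that the leading coefficient of $\phi_{T^n}=(\phi_T)^n$ is a product of $q$-power twists of $g_r(T)\neq 0$, and is worth a sentence if you want the write-up to be fully self-contained. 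Also make sure to observe (as you implicitly do) that the hypothesis on the $A$-characteristic passes to each prime power $\fl_i^{e_i}$ dividing $a$, so the cardinality formula from the first step applies to $\phi[\fl_i]$ and $\phi[\fl_i^{e_i}]$ as well.
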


Note that if the characteristic of $K$ does not divide $a$, then $\phi_a(x)=\gamma(a)x+\sum^{r\cdot {\rm{deg}}(a)}_{i=1}g_i(a)x^{q^i}$ is a separable polynomial, so $G_K=\Gal(K^\sep/K)$ acts on $\phi[a]$ and this action commutes with the action of $A$. From this action we get a 
representation $G_K\to \Aut_A(\phi[a])\cong \GL_r(A/aA)$. When $a=\fl$ is irreducible, we denoted this representation $\bar{\rho}_{\phi, \fl}$. 
Taking inverse limit with respect to $\fl^i$, we get the \textbf{$\fl$-adic Galois representation}
$${\rho}_{\phi,\fl}: G_K \longrightarrow \varprojlim_{i}{\Aut}(\phi[\fl^i])\cong \GL_r(A_\fl),$$
where $A_\fl$ denotes the completion of $A$ at $\fl$. 

Let $\phi$ be a Drinfeld module over $F$ defined by $$\phi_T=T+g_1\tau+g_2\tau^2+\cdots+g_r\tau^r.$$ 
We say that $\phi$ has \textbf{good reduction} at the maximal ideal $\fp$ of $A$ if all $g_i$ are integral at $\fp$, i.e., lie in $A_\fp$, 
and $g_r$ is a unit in $A_\fp$. The \textbf{reduction}  of $\phi$ at $\fp$ 
is the Drinfeld module $\phi\otimes \F_\fp$ over $\F_\fp$ defined by 
$(\phi\otimes \F_\fp)_T=\bar{T}+\bar{g_1}\tau+\cdots+\bar{g_r}\tau^r$, where $\bar{g_i}$ is the reduction of $g_i$ modulo $\fp$. 

If $\mathfrak{p} \neq \mathfrak{l}$ is a prime of good reduction of $\phi$, then the $\mathfrak{l}$-adic Galois representation ${\rho}_{\phi,\mathfrak{l}}$ is unramified at $\mathfrak{p}$. Therefore, the matrix ${\rho}_{\phi,\mathfrak{l}}({\rm{Frob}}_{\mathfrak{p}}) \in {\rm{GL}}_r(A_\mathfrak{l})$ is well-defined up to conjugation, so we can consider the characteristic polynomial $P_{\phi,\mathfrak{p}}(x)=\det(xI-{\rho}_{\phi,\mathfrak{l}}({\rm{Frob}}_{\mathfrak{p}}))$ of the Frobenius element ${\rm{Frob}_{\mathfrak{p}}}$ 
at $\fp$. 
It is known that the coefficients of polynomial $P_{\phi,\mathfrak{p}}(x)$ are independent of the choice of $\mathfrak{l}$ and belong to $A$ (see \cite{Gek91} Corollary 3.4). 
Moreover, $P_{\phi,\mathfrak{p}}(x)$ is equal to the characteristic polynomial of Frobenius endomorphism of $\phi\otimes\mathbb{F}_{\mathfrak{p}}$ acting on $T_{\mathfrak{l}}(\phi\otimes\mathbb{F}_{\mathfrak{p}})$.

We will need a fact about the value $P_{\phi,\mathfrak{p}}(1)$ which is the analogue of Hasse's theorem about the number of 
rational points on an elliptic curve over a finite field. 
By the structure theorem for finitely generated modules 
over principal ideal domains, we have an isomorphism of $A$-module 
$$^{\phi\otimes\mathbb{F}_\mathfrak{p}}\mathbb{F}_\mathfrak{p}\cong A/b_1A\times\cdots\times A/b_sA,$$ 
for uniquely determined monic polynomials $b_1\mid b_2\mid \cdots \mid b_s$. 

\begin{prop}\label{red}
We have an equality of ideals 
$\left(\prod_{i=1}^{s}b_i\right)= (P_{\phi,\mathfrak{p}}(1))$.
\end{prop}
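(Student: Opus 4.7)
The strategy is to compare the two ideals prime-by-prime: I will show that $v_\mathfrak{l}\!\left(\prod b_i\right)=v_\mathfrak{l}(P_{\phi,\mathfrak{p}}(1))$ at every maximal ideal $\mathfrak{l}$ of $A$. The bulk of the work is at primes $\mathfrak{l}\neq\mathfrak{p}$, where the $\mathfrak{l}$-adic Tate module provides the natural bridge; the equality at $\mathfrak{l}=\mathfrak{p}$ will then be forced by a global degree count using the Riemann hypothesis for Drinfeld modules over finite fields.

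For $\mathfrak{l}\neq\mathfrak{p}$, consider the Tate module $T_\mathfrak{l}:=T_\mathfrak{l}(\phi\otimes\mathbb{F}_\mathfrak{p})$, a free $A_\mathfrak{l}$-module of rank $r$ on which the Frobenius $F:=\mathrm{Frob}_\mathfrak{p}$ acts with characteristic polynomial $P_{\phi,\mathfrak{p}}$. Set $V_\mathfrak{l}:=T_\mathfrak{l}\otimes_{A_\mathfrak{l}}F_\mathfrak{l}$, so that $V_\mathfrak{l}/T_\mathfrak{l}\cong\phi[\mathfrak{l}^\infty]$ as Galois modules. The $\mathfrak{l}$-primary part of ${}^{\phi\otimes\mathbb{F}_\mathfrak{p}}\mathbb{F}_\mathfrak{p}$ is exactly $\phi[\mathfrak{l}^\infty](\mathbb{F}_\mathfrak{p})=\ker(1-F\mid V_\mathfrak{l}/T_\mathfrak{l})$. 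Granting for the moment that $P_{\phi,\mathfrak{p}}(1)\neq 0$ (verified below), the endomorphism $1-F$ is bijective on $V_\mathfrak{l}$, so the snake lemma applied to $1-F$ on $0\to T_\mathfrak{l}\to V_\mathfrak{l}\to V_\mathfrak{l}/T_\mathfrak{l}\to 0$ yields a canonical isomorphism $\phi[\mathfrak{l}^\infty](\mathbb{F}_\mathfrak{p})\cong T_\mathfrak{l}/(1-F)T_\mathfrak{l}$. Standard linear algebra over the discrete valuation ring $A_\mathfrak{l}$ identifies the Fitting ideal of the right-hand side with $(\det(1-F\mid T_\mathfrak{l}))=(P_{\phi,\mathfrak{p}}(1))A_\mathfrak{l}$, while the $\mathfrak{l}$-primary part of $\prod_i A/b_iA$ visibly has Fitting ideal $(\prod b_i)A_\mathfrak{l}$. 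Thus $v_\mathfrak{l}(\prod b_i)=v_\mathfrak{l}(P_{\phi,\mathfrak{p}}(1))$ for every $\mathfrak{l}\neq\mathfrak{p}$.

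The main obstacle is the prime $\mathfrak{l}=\mathfrak{p}$, where the Tate-module argument breaks down because $T_\mathfrak{p}(\phi\otimes\mathbb{F}_\mathfrak{p})$ has rank strictly less than $r$ in $A$-characteristic $\mathfrak{p}$. I would bypass this by a global degree count. On one hand, $|{}^{\phi\otimes\mathbb{F}_\mathfrak{p}}\mathbb{F}_\mathfrak{p}|=|\mathbb{F}_\mathfrak{p}|=q^{\deg\mathfrak{p}}$ combined with the decomposition $\prod_i A/b_iA$ gives $\deg(\prod b_i)=\deg\mathfrak{p}$. On the other hand, the Riemann hypothesis for Drinfeld modules over finite fields (see \cite{Gek91}) asserts that each root $\alpha_i$ of $P_{\phi,\mathfrak{p}}(x)$ satisfies $|\alpha_i|_\infty=q^{\deg\mathfrak{p}/r}>1$; by the ultrametric inequality $|1-\alpha_i|_\infty=|\alpha_i|_\infty$, which gives $\deg_T P_{\phi,\mathfrak{p}}(1)=\deg\mathfrak{p}$ and in particular $P_{\phi,\mathfrak{p}}(1)\neq 0$, justifying the step invoked above. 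Using the formula $\deg f=\sum_\mathfrak{l} v_\mathfrak{l}(f)\deg\mathfrak{l}$ for nonzero $f\in A$, the agreement of all $v_\mathfrak{l}$ with $\mathfrak{l}\neq\mathfrak{p}$ together with the coincidence of total degrees forces $v_\mathfrak{p}(\prod b_i)=v_\mathfrak{p}(P_{\phi,\mathfrak{p}}(1))$, completing the equality of ideals.
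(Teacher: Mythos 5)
Your argument is correct, but it is worth noting that the paper itself offers no argument at all for this proposition: its ``proof'' is a citation to Gekeler's paper on finite Drinfeld modules, where this equality (an Euler--Poincar\'e characteristic formula, the analogue of $\#E(\mathbb{F}_p)=P(1)$ for elliptic curves) is established. What you have written is essentially a self-contained reconstruction of the standard proof, and it holds together: away from the characteristic, the snake-lemma identification $\phi[\mathfrak{l}^\infty](\mathbb{F}_\mathfrak{p})\cong T_\mathfrak{l}/(1-F)T_\mathfrak{l}$ together with the Fitting-ideal computation correctly gives $v_\mathfrak{l}(\prod b_i)=v_\mathfrak{l}(P_{\phi,\mathfrak{p}}(1))$ for all $\mathfrak{l}\neq\mathfrak{p}$; and your handling of the delicate prime $\mathfrak{l}=\mathfrak{p}$ (where the Tate module has too small a rank) by the counting identity $\deg\bigl(\prod b_i\bigr)=\deg\mathfrak{p}$ and the Riemann hypothesis plus the ultrametric inequality, which force $\deg P_{\phi,\mathfrak{p}}(1)=\deg\mathfrak{p}$ and in particular $P_{\phi,\mathfrak{p}}(1)\neq 0$, is a clean way to avoid any crystalline or Dieudonn\'e-type argument in characteristic $\mathfrak{p}$. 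Compared with simply quoting Gekeler, your route buys transparency and makes visible exactly which inputs are needed; note, however, that it is not input-free: you still rely on two nontrivial facts from the same source, namely that $P_{\phi,\mathfrak{p}}$ has coefficients in $A$, is independent of $\mathfrak{l}$, and is the characteristic polynomial of Frobenius on each $T_\mathfrak{l}(\phi\otimes\mathbb{F}_\mathfrak{p})$, and the Riemann hypothesis $|\alpha_i|_\infty=q^{\deg\mathfrak{p}/r}$ for the Frobenius eigenvalues. Since the paper already quotes the former and the latter is standard, this is a legitimate and complete alternative to the bare citation.
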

\begin{proof}
See \cite{Gek91}.
\end{proof}

We conclude this section by recalling the Brauer-Nesbitt Theorem. 
Let $G$ be a finite group and $V$ be a representation of $G$ defined over a field $K$ of characteristic $p$. 
The {\textbf{semisimplification}} $V^{\ss}$ of $V$ is the direct sum of Jordan-H\"older constituents of the $K[G]$-module $V$. In other words, suppose the Jordan-H\"older series of the $K[G]$-module $V$ is $V=V_0\supset V_1\supset V_2\supset\cdots\supset V_n=\{0\}$, then $$V^{ss}=\oplus_{i=0}^{n-1}V_i/V_{i+1}.$$

\begin{thm}[Brauer-Nesbitt Theorem]\label{bnt}
Let $G$ be a finite group. Let $V$ and $W$ be two $K[G]$-modules which are finite dimensional as $K$-vector spaces. If for all $g\in G$, the characteristic polynomial of $g$ acting on $V$ and $W$ are equal, then $V$ and $W$ have the same Jordan-H\"older constituents. In other words, the semisimplification $V^\ss$ is isomorphic to $W^\ss$ as $K[G]$-modules.
\end{thm}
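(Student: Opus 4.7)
The plan is to first reduce to the case where $V$ and $W$ are semisimple, and then pin down the multiplicities of their simple constituents using the characteristic-polynomial data.

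For the reduction, fix a Jordan-H\"older filtration $V=V_0\supset V_1\supset\cdots\supset V_n=\{0\}$ and pick a $K$-basis adapted to it. In such a basis every $g\in G$ is represented by a block upper-triangular matrix whose diagonal blocks give the action of $g$ on the composition factors $V_i/V_{i+1}$. Because the characteristic polynomial of a block upper-triangular matrix is the product of those of its diagonal blocks, one gets $\det(xI-g\mid V)=\det(xI-g\mid V^{\ss})$ for every $g\in G$, and the analogous identity holds for $W$. We may therefore assume $V$ and $W$ are semisimple, in which case the goal becomes $V\cong W$ as $K[G]$-modules.

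Let $B$ denote the image of $K[G]$ in $\mathrm{End}_K(V\oplus W)$. Because $V\oplus W$ is a faithful semisimple $B$-module, $B$ is itself a semisimple $K$-algebra, so by Artin-Wedderburn $B\cong\prod_{i=1}^t M_{d_i}(D_i)$ for division $K$-algebras $D_i$, with associated simple modules $S_1,\dots,S_t$. Decompose $V\cong\bigoplus_i S_i^{n_i}$ and $W\cong\bigoplus_i S_i^{m_i}$ into isotypic components; the hypothesis then becomes the polynomial identity
\[
\prod_{i=1}^t \det(xI-g\mid S_i)^{n_i} \;=\; \prod_{i=1}^t \det(xI-g\mid S_i)^{m_i} \qquad\text{for every } g\in G.
\]
The task is to extract $n_i=m_i$ from this family of identities. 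After base-changing to an algebraic closure $\bar K$ and factoring each side, one compares the integer multiplicity with which each eigenvalue of $g$ appears on $V$ versus $W$; invoking Jacobson density (which identifies $B\otimes_K\bar K$ with the full product of matrix algebras over $\bar K$) together with the fact that the $S_i$ are pairwise non-isomorphic allows one to separate the contribution of each simple factor in the above product and force $n_i=m_i$.

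The main obstacle is precisely this separation step. In positive characteristic the ordinary traces of the $S_i$ may satisfy nontrivial $\F_p$-linear relations, so trace information alone is insufficient to recover multiplicities; the essential content of the Brauer-Nesbitt theorem is that the full characteristic polynomial is strictly richer than the trace and encodes eigenvalue multiplicities as honest integers, and it is this integer-valued rigidity---rather than any merely linear character information---which ultimately forces $n_i=m_i$ in arbitrary characteristic.
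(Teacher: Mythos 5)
The paper does not actually prove this statement; it simply cites Curtis--Reiner, p.~215, so the only question is whether your argument stands on its own. Your first reduction is fine: filtering $V$ by a Jordan--H\"older series and using block upper-triangular matrices shows the characteristic polynomials only see $V^{\ss}$, so one may assume $V$ and $W$ semisimple. The genuine gap is the step you yourself flag as ``the main obstacle'': you never carry it out. Asserting that ``integer-valued rigidity ultimately forces $n_i=m_i$'' is a restatement of the theorem, not a proof. Worse, the mechanism you gesture at --- Jacobson density for $B=\mathrm{im}(K[G])$ --- does not mesh with the hypothesis. Density hands you \emph{algebra} elements (e.g.\ an element acting as the identity on $S_1$ and as $0$ on the other $S_j$, which would instantly give $n_1=m_1$ by counting the eigenvalue $1$), but the hypothesis only gives equality of characteristic polynomials for \emph{group} elements $g\in G$. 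Unlike traces, characteristic polynomials are not additive, so the hypothesis does not extend by linearity from $G$ to $K[G]$, and the ``separation of the contribution of each simple factor'' in your product identity is exactly what remains to be proved. (There is also a smaller unaddressed point: passing to $\bar K$ requires knowing the $S_i$ stay semisimple, which for group algebras follows from the fact that simple constituents are realizable over separable extensions, but this deserves a word.)

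The standard way to close the gap --- and the content of the reference the paper cites --- is Brauer character theory: since $G$ is finite, the eigenvalues of $\bar{\rho}(g)$ over $\bar K$ are roots of unity; lifting them to characteristic zero, the equality of characteristic polynomials for every $g$ (in fact every $p$-regular $g$) says the Brauer characters of $V^{\ss}$ and $W^{\ss}$ coincide, and the linear independence of the irreducible Brauer characters then forces the multiplicities of the composition factors to agree. If you want to salvage your density-based route instead, you must either strengthen the hypothesis to all elements of $K[G]$ or supply an argument that equality on $G$ alone suffices; as written, the proposal proves the easy reduction and leaves the heart of the theorem unproved.
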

\begin{proof}
See \cite{CuRe62}, p.215.
\end{proof}

\section{Proof of Theorem \ref{mainthm1}}

Let $\phi$ be a Drinfeld $A$-module over $F$ of rank $2$ and $\mathfrak{l}$ be a prime ideal of $A$. Replacing $\phi$ by $c\phi c^{-1}$ with suitable $c\in A$ such that $(c\phi c^{-1})_T\in A\{\tau\}$, we may assume $\phi$ is defined over $A$. 
Let $S$ be a subset of primes of $A$ with density $1$ where $\phi$ has good reduction.   
Assume:
\begin{center}
	For all $\fp\in S$, the space $(\phi\otimes\mathbb{F}_\mathfrak{p})[\mathfrak{l}]$ contains a nontrivial $\mathbb{F}_\mathfrak{p}$-rational point.
\end{center}
The assumption is equivalent to saying that the $A$-module $^{\phi\otimes\mathbb{F}_\mathfrak{p}}\mathbb{F}_\mathfrak{p}$ has a nontrivial $\mathfrak{l}$-torsion. Hence Proposition \ref{red} implies $P_{\phi,\mathfrak{p}}(1)$ is divisible by $\mathfrak{l}$ for almost all prime ideal $\mathfrak{p}$ of $A$. Thus we have
$$P_{\phi,\mathfrak{p}}(1)=1-{\rm{tr}}({\rm{Frob}}_\mathfrak{p})+\det({\rm{Frob}}_\mathfrak{p})\equiv 0 \mod \mathfrak{l}.$$
The Chebotarev density theorem implies that for all $g\in G_F$, the action of $g$ on $\phi[\mathfrak{l}]$ satisfies
$${\rm{tr}}(g)\equiv 1+\det(g) \mod \mathfrak{l}.$$
Now we can compare the $G_F$-modules $\phi[\mathfrak{l}]$ and $\mathbb{1}\oplus\det(\phi[\mathfrak{l}])$, where $\mathbb{1}$ denotes the trivial representation. For any $g\in G_F$, we know the characteristic polynomial of $g$ acting on $\phi[\mathfrak{l}]$ and $\mathbb{1}\oplus\det(\phi[\mathfrak{l}])$ are the same. Hence Theorem \ref{bnt} implies $\phi[\mathfrak{l}]$ and $\mathbb{1}\oplus\det(\phi[\mathfrak{l}])$ are isomorphic up to semisimplification. 

However, Theorem \ref{mainthm1} does not imply that $\phi[\mathfrak{l}]$ contains a nontrivial $\mathfrak{l}$-torsion point defined over $F$. We give an example showing that there is a Drinfeld $A$-module $\phi$ of rank $2$ 
such that $\phi[\fl]^\ss$ contains the trivial representation while $\phi[\mathfrak{l}]$ has no nontrivial points defined over $F$. 

\begin{example}
	Consider the Drinfeld module $\phi$ over $F$ defined by
	$$\phi_T(x)=\prod^{q-1}_{i=0}(x^q+Tx-i).$$
	Let $e_1$ be a nonzero root of $x^q+Tx$ and $e_2$ be a nonzero root of $x^q+Tx-1$. Then, $\phi[T]$ as an $\F_q$-vector space has 
	a a basis $\{e_1, e_2\}$. An element $g\in G_F$ act on this basis by 
	\begin{align*}
	ge_1&= c_ge_1, \quad c_g\in \mathbb{F}_q^*,\\
	ge_2&=e_2+d_ge_1,\quad d_g\in \mathbb{F}_q. 
	\end{align*}
Hence $\bar{\rho}_{\phi, T}$ is the Galois representation
$$\begin{array}{rccl}
&G_F&\rightarrow&{\rm{GL}}(V)\cong {\rm{GL}}_2(\mathbb{F}_q)\\
&g&\mapsto&\left(\begin{array}{cc}c_g & d_g \\0 & 1\end{array}\right)
\end{array}.
$$
Since there are $c_g\neq 1$, this representation does not contain the trivial representation as a direct summand. 
\end{example}

\section{Proof of Theorem \ref{mainthm2}}

\subsection{Basic setting}
Similar to the previous section, let $\phi$ be a Drinfeld $A$-module over $F$ of rank $3$ and $\mathfrak{l}$ be a prime ideal of $A$. 
Let $S$ be a subset of primes of $A$ with density $1$ where $\phi$ has good reduction.  Assume:
\begin{center}
	For all $\fp\in S$, the space $(\phi\otimes\mathbb{F}_\mathfrak{p})[\mathfrak{l}]$ contains a nontrivial $\mathbb{F}_\mathfrak{p}$-rational point.
\end{center}

Again, using Proposition \ref{red}, we get 
$$P_{\phi,\mathfrak{p}}(1)=\det(1-{\rm{Frob}}_\mathfrak{p})\equiv 0 \mod \mathfrak{l}$$
for all $\mathfrak{p}\in S$. Then, by the Chebotarev density theorem, the characteristic polynomial of any $g\in G_F$ acting on $\phi[\mathfrak{l}]$ satisfies
$$\det(1-g)=\sum_{i=0}^{3}(-1)^i{\rm{Tr}}(g|_{\Lambda^i(\phi[\mathfrak{l}])})=0.$$
Here $\Lambda^i(\phi[\mathfrak{l}])$ are exterior powers of $\phi[\mathfrak{l}]$, and $\Lambda^0(\phi[\mathfrak{l}])$ is defined to be the trivial representation $\mathbb{1}$. Therefore, we have 
$${\rm{Tr}}(\mathbb{1}\oplus \Lambda^2(\phi[\mathfrak{l}]))={\rm{Tr}}(\phi[\mathfrak{l}]\oplus \det(\phi[\mathfrak{l}])).$$
Now we can compare the $\mathbb{F}_\mathfrak{l}[G_F]$-modules $\mathbb{1}\oplus \Lambda^2(\phi[\mathfrak{l}])$ and $\phi[\mathfrak{l}]\oplus \det(\phi[\mathfrak{l}])$. For any $g\in G_F$, they both have the same trace. But a stronger claim is true: 
\begin{claim}
$g$ on both spaces has the same characteristic polynomial.
\end{claim}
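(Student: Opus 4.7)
My plan is to reduce the claim to an identity of polynomials in the three elementary symmetric functions of the eigenvalues of $g$ acting on $V := \phi[\mathfrak{l}]$, and then verify it using the hypothesis $\det(1-g)=0$ already established from the trace equality.

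Let me set $t := \mathrm{Tr}(g \mid V)$, $s := \mathrm{Tr}(g \mid \Lambda^2 V)$, and $d := \det(g \mid V)$. Then the characteristic polynomial of $g$ on $V$ is $x^3 - t x^2 + s x - d$, and the standing assumption (from applying Chebotarev and Proposition \ref{red} as in the preceding paragraph) is the single scalar identity
\begin{equation*}
1 - t + s - d = 0.
\end{equation*}
The first real computation I would do is to express the characteristic polynomial of $g$ on $\Lambda^2 V$ in terms of $t, s, d$: passing to an algebraic closure of $\mathbb{F}_\mathfrak{l}$ and writing the eigenvalues of $g$ on $V$ as $\alpha, \beta, \gamma$, the eigenvalues on $\Lambda^2 V$ are $\alpha\beta, \alpha\gamma, \beta\gamma$, whose elementary symmetric functions work out to $s$, $dt$, and $d^2$. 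Hence the characteristic polynomial of $g$ on $\Lambda^2 V$ is $x^3 - s x^2 + dt\, x - d^2$.

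Next I would simply multiply out the two degree-four characteristic polynomials and compare. The characteristic polynomial of $g$ on $V \oplus \det V$ is
\begin{equation*}
(x^3 - t x^2 + s x - d)(x - d) = x^4 - (d + t) x^3 + (dt + s) x^2 - d(s+1) x + d^2,
\end{equation*}
while that of $g$ on $\mathbb{1} \oplus \Lambda^2 V$ is
\begin{equation*}
(x - 1)(x^3 - s x^2 + dt\, x - d^2) = x^4 - (s+1) x^3 + (dt + s) x^2 - d(d + t) x + d^2.
\end{equation*}
The coefficients of $x^4$, $x^2$, and $x^0$ already match, so equality reduces to the two identities $s + 1 = d + t$ and $d(s+1) = d(d+t)$, both of which follow from $1 - t + s - d = 0$.

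The only subtle point is that the equality of the linear coefficients requires cancelling $d$ in $d(s+1) = d(d+t)$ only implicitly, but here there is nothing to worry about: since $g$ acts as an $\mathbb{F}_\mathfrak{l}$-linear automorphism of $V$, we have $d \in \mathbb{F}_\mathfrak{l}^{\times}$, and in any case the linear coefficients agree as ring elements without needing to divide. I do not foresee a genuine obstacle; the argument is a direct symmetric-function computation, and the main thing to be careful about is keeping the signs and the roles of $t, s, d$ straight when identifying the characteristic polynomial of $\Lambda^2 V$.
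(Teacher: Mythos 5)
Your proof is correct and is essentially the paper's argument: both reduce the claim to the single relation $1-\mathrm{Tr}(g)+\mathrm{Tr}(g\mid\Lambda^2\phi[\mathfrak{l}])-\det(g)=0$ coming from $\det(1-g)=0$ and then work with the symmetric functions of the eigenvalues of $g$. The only cosmetic difference is that the paper factors that relation as $(1-\lambda_1)(1-\lambda_2)(1-\lambda_3)=0$ to conclude some eigenvalue equals $1$ and then matches the two eigenvalue multisets, whereas you expand both quartic characteristic polynomials and compare coefficients directly; both verifications are complete.
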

\begin{proof}[Proof of claim]
Let $\lambda_1, \lambda_2, \lambda_3$ be the characteristic values of $g$ acting on $\phi[\mathfrak{l}]$. Then the characteristic values of $g$ acting on $\Lambda^2(\phi[\mathfrak{l}])$ and $\det(\phi[\mathfrak{l}])$ are $\lambda_1\lambda_2,\ \lambda_1\lambda_3,\ \lambda_2\lambda_3$ and $\lambda_1\lambda_2\lambda_3$, respectively. The action of $g$ on $\mathbb{1}\oplus \Lambda^2(\phi[\mathfrak{l}])$ and $\phi[\mathfrak{l}]\oplus \det(\phi[\mathfrak{l}])$ have the same trace
implies

$$
\begin{array}{rccl}

&\lambda_1+\lambda_2+\lambda_3+\lambda_1\lambda_2\lambda_3&=&1+\lambda_1\lambda_2+\lambda_1\lambda_3+\lambda_2\lambda_3.\\

\Rightarrow&\lambda_1+\lambda_2-\lambda_1\lambda_2-1&=&(\lambda_1+\lambda_2-\lambda_1\lambda_2-1)\lambda_3\\

\Rightarrow&(\lambda_1-1)(1-\lambda_2)&=&(\lambda_1-1)(1-\lambda_2)\lambda_3
\end{array}
$$
Hence one of $\lambda_1$, $\lambda_2$, and $\lambda_3$ must be equal to $1$, which implies the characteristic values of $g$ acting on both spaces are the same.
\end{proof}
As we have deduced that the $G_F$-action on $\mathbb{1}\oplus \Lambda^2(\phi[\mathfrak{l}])$ and $\phi[\mathfrak{l}]\oplus \det(\phi[\mathfrak{l}])$ have the same characteristic polynomials, the Brauer-Nesbitt theorem implies both representations are isomorphic up to semisimplification. In other words, we have

\begin{equation}\label{eqMihranInserted}
\mathbb{1}\oplus \Lambda^2(\phi[\mathfrak{l}])^{\ss}\cong \phi[\mathfrak{l}]^{\ss}\oplus \det(\phi[\mathfrak{l}]).
\end{equation}

\subsection{Case (1)} Suppose 
$\det(\phi[\mathfrak{l}])\neq \mathbb{1}$. Then, the  isomorphism \eqref{eqMihranInserted} implies that 
the semisimplification of $\phi[\mathfrak{l}]$ contains the  trivial representation. 


\subsection{Case (2)}
Now assume $\det(\phi[\mathfrak{l}])=\mathbb{1}$, the $G_F$-module $\phi[\mathfrak{l}]$ is reducible, and the semisimplification of $\phi[\mathfrak{l}]$ does not contain the trivial representation. 

We have the isomorphisms of $G_F$-modules
$$\Lambda^2(\phi[\mathfrak{l}])\cong{\rm{Hom}}(\phi[\mathfrak{l}],\det(\phi[\mathfrak{l}]))\cong\phi[\mathfrak{l}]^{\lor}\otimes\det(\phi[\mathfrak{l}])\cong\phi[\mathfrak{l}]^{\lor}, $$
where the first isomorphism is given by 
$$
\begin{array}{clc}
\Lambda^2(\phi[\mathfrak{l}])&\rightarrow&{\rm{Hom}}(\phi[\mathfrak{l}],\det(\phi[\mathfrak{l}]))\\
u\land v&\mapsto&(w\mapsto u\land v\land w)

\end{array}
$$
Combining this with \eqref{eqMihranInserted}, we get
$$\phi[\mathfrak{l}]^{ss}\cong(\phi[\mathfrak{l}]^{\lor})^{ss},$$
i.e. the semisimplification of $\phi[\mathfrak{l}]$ is self dual.

Now we can consider the Jordan-H\"older series of $\phi[\mathfrak{l}]$.
\begin{enumerate}
\item[Case (i).] $\phi[\mathfrak{l}]\supset V\supset \{0\}$, where $V$ is an irreducible $G_F$-submodule of $\phi[\mathfrak{l}]$ with dimension $1$ or $2$.

In this case, the action of $g\in G_F$ on $\phi[\mathfrak{l}]^{ss}$ with respect to some basis is of the form  

$$\left(\begin{array}{c|c}\chi(g) &  \\\hline  & g|_{V}\end{array}\right)\ {\rm{or}}\ \left(\begin{array}{c|c}g|_{\phi[\mathfrak{l}]/V} &  \\\hline  & \chi(g)\end{array}\right).$$

Here $\chi:G_F\rightarrow \mathbb{F}_\mathfrak{l}^*$ is a nontrivial character of $G_F$. Since we have  
$\phi[\mathfrak{l}]^{ss}\cong(\phi[\mathfrak{l}]^{\lor})^{ss}$, then the character $\chi$ always satisfies
$$\chi(g)=\chi(g^{-1})\quad \forall\ g\in G_F.$$
Thus $\chi:G_F\rightarrow \{\pm 1\}$ is a nontrivial quadratic character. We may find some non-square element $c\in F$ such that $\chi$ can be rewritten in this way:
$$
\begin{array}{cccl}
\chi:&{\rm{Gal}}(F(\sqrt{c})/F)&\rightarrow&\{\pm1\} \\
       &g&\mapsto&\chi(g)
\end{array}
.$$
where $g\cdot\sqrt{c}=\chi(g)\cdot\sqrt{c}$.

To construct a suitable quadratic twist, we need the following Lemma: 

\begin{lem}\label{twist}
If the quadratic twist $\phi'$ of $\phi$ is given by 
$$\phi_T'=\sqrt{c}\cdot\phi_T\cdot\sqrt{c}^{-1},$$
then 
$${\bar{\rho}}_{\phi',\mathfrak{l}}\cong {\bar{\rho}}_{\phi,\mathfrak{l}}\otimes\chi$$
\end{lem}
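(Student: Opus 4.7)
The plan is to exhibit an explicit $G_F$-equivariant identification between $\phi'[\mathfrak{l}]$ and $\phi[\mathfrak{l}]$ (with $G_F$ acting by the twist), arising from the isomorphism $u:=\sqrt{c}$ that realizes $\phi\cong\phi'$ over $F(\sqrt{c})$.

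First I would verify that $\phi'$, as defined by the conjugation formula, really is a Drinfeld module over $F$ and not merely over $F(\sqrt{c})$. Using the commutation rule $\tau^i\cdot x = x^{q^i}\tau^i$ in the skew polynomial ring, one computes
$$\sqrt{c}\,\tau^i\,\sqrt{c}^{\,-1} \;=\; \sqrt{c}^{\,1-q^i}\,\tau^i \;=\; c^{(1-q^i)/2}\,\tau^i.$$
Since $q$ is odd (as $p\geq 5$), each exponent $(1-q^i)/2$ is an integer, so $c^{(1-q^i)/2}\in F^\times$ and every coefficient of $\phi'_T$ lies in $F$. By the very formula defining $\phi'$, the degree-zero element $u=\sqrt{c}\in F(\sqrt{c})\{\tau\}$ satisfies $u\,\phi_T = \phi'_T\,u$, hence $u\,\phi_a = \phi'_a\,u$ for every $a\in A$ (both sides being $\mathbb{F}_q$-algebra homomorphisms $A\to F(\sqrt{c})\{\tau\}$ that agree on the generator $T$). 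Thus $u$ is an isomorphism $\phi\xrightarrow{\sim}\phi'$ of Drinfeld modules over $F(\sqrt{c})$.

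Next, $u$ acts on $F^{\mathrm{alg}}$ as multiplication by $\sqrt{c}$, and the intertwining relation guarantees it maps $\phi[\mathfrak{l}]$ into $\phi'[\mathfrak{l}]$, yielding an $A$-linear isomorphism
$$u_*:\phi[\mathfrak{l}]\longrightarrow\phi'[\mathfrak{l}],\qquad \alpha\longmapsto\sqrt{c}\cdot\alpha.$$
I then transport the $G_F$-action on $\phi'[\mathfrak{l}]$ back to $\phi[\mathfrak{l}]$ along $u_*$: for $g\in G_F$ and $\alpha\in\phi[\mathfrak{l}]$,
$$g\cdot u_*(\alpha) \;=\; g\!\left(\sqrt{c}\,\alpha\right) \;=\; g(\sqrt{c})\,g(\alpha) \;=\; \chi(g)\sqrt{c}\cdot g(\alpha) \;=\; \chi(g)\,u_*(g\cdot\alpha),$$
using that $g(\sqrt{c})=\chi(g)\sqrt{c}$ by the definition of $\chi$. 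This identity reads exactly as the assertion that, under the $\mathbb{F}_\mathfrak{l}$-linear isomorphism $u_*$, the action $\bar{\rho}_{\phi',\mathfrak{l}}(g)$ corresponds to $\chi(g)\cdot\bar{\rho}_{\phi,\mathfrak{l}}(g)$, i.e., $\bar{\rho}_{\phi',\mathfrak{l}}\cong\bar{\rho}_{\phi,\mathfrak{l}}\otimes\chi$.

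I expect no single step to be genuinely difficult; the only mild subtlety lies in the integrality verification $\sqrt{c}\,\tau^i\,\sqrt{c}^{\,-1}\in F\{\tau\}$, which crucially uses that $q$ is odd so that the half-integer exponents collapse to integers. Everything else is the standard principle that a Galois-semilinear isomorphism with cocycle a quadratic character produces precisely that character twist at the level of representations.
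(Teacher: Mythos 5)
Your proof is correct. The one point that needed care --- that $\sqrt{c}\,\tau^i\,\sqrt{c}^{\,-1}=c^{(1-q^i)/2}\tau^i$ has coefficients in $F$ because $q$ is odd --- you handle explicitly, and the rest (the degree-zero isogeny $u=\sqrt{c}$ giving an $A$-linear isomorphism $\phi[\mathfrak{l}]\to\phi'[\mathfrak{l}]$, followed by the semilinear transport $g(\sqrt{c}\,\alpha)=\chi(g)\sqrt{c}\,g(\alpha)$) is exactly the standard mechanism by which an isomorphism defined over the quadratic extension $F(\sqrt{c})$ produces the twist of the residual representation by the associated quadratic character $\chi$. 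The only difference from the paper is one of presentation: the paper disposes of the lemma with a single citation to Lemma 4.1 of Chen--Lee \cite{ChLe19}, which packages precisely this computation (in fact in greater generality, for twists attached to characters of higher order via $c^{1/(q-1)}$-type conjugation), whereas you reprove the quadratic case from scratch. What your route buys is a self-contained, elementary verification tailored to the case actually used in Case (2), at the cost of not connecting to the more general twisting formalism of \cite{ChLe19}; mathematically the two are the same argument, and your write-up would serve as an inline proof if one wished to avoid the external reference.
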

\begin{proof}
Apply Lemma 4.1 in \cite{ChLe19} 
\end{proof}

With the Lemma above, we know that the action of $g\in G_F$ on $\phi'[\mathfrak{l}]^{ss}$ with respect to some basis is of the form  

$$\left(\begin{array}{c|c}\chi^2(g) &  \\\hline  & *\end{array}\right)\ {\rm{or}}\ \left(\begin{array}{c|c}* &  \\\hline  & \chi^2(g)\end{array}\right).$$
Combining with the fact that $\chi$ is a quadratic character, we can deduce that $\phi'[\mathfrak{l}]^{ss}$ contains the trivial representation.

\item[Case (ii).] $\phi[\mathfrak{l}]\supset V_1\supset V_2\supset\{0\}$, where $V_1$ and $V_2$ are $G_F$-submodules of $\phi[\mathfrak{l}]$ with dimension $2$ and $1$, respectively.
In this case, the action of $g\in G_F$ on $\phi[\mathfrak{l}]^{ss}$ with respect to some basis is of the form
$$\left(\begin{array}{ccc}\chi_1(g) &  &  \\ & \chi_2(g) &  \\ &  & \chi_3(g)\end{array}\right)$$
Here $\chi_i:G_F\rightarrow \mathbb{F}_\mathfrak{l}^*$ are nontrivial characters of $G_F$ for $i=1,2,3$. The self-duality of $\phi[\mathfrak{l}]$ then implies that for each $i\in\{1,2,3\}$,
$$\chi_i(g)=\chi_j(g^{-1})\ {\rm{for\ some\ }}j\in\{1,2,3\}.$$
If $i\neq j$, then we may assume that $\chi_1(g)=\chi_2(g^{-1})$ for all $g\in G_F$. The assumption that $\det(\phi[\mathfrak{l}])$ is the trivial representation implies $\chi_3(g)=1$ for all $g\in G_F$. Hence $\phi[\mathfrak{l}]^{ss}$ contains a trivial representation, contradicts to our assumption. Therefore, we have
$$\chi_i(g)=\chi_i(g^{-1})\ {\rm{for\ all\ }}i\in\{1,2,3\}.$$
In other words, each $\chi_i$ is a nontrivial quadratic character of $G_F$. Now we can focus on some $\chi_i$ and repeat what we did in case 1. There is some non-square element $c\in F$ such that
$$
\begin{array}{cccl}
\chi_i:&{\rm{Gal}}(F(\sqrt{c})/F)&\rightarrow&\{\pm1\} \\
       &g&\mapsto&\chi_i(g)
\end{array}
.$$
where $g\cdot\sqrt{c}=\chi_i(g)\cdot\sqrt{c}$.

Lemma \ref{twist} then implies the quadratic twist $\phi_T'=\sqrt{c}\cdot\phi_T\cdot\sqrt{c}^{-1}$ satisfies the property that $\phi'[\mathfrak{l}]^{ss}$ contains a trivial representation.
\end{enumerate}
Therefore, the second case of Theorem \ref{mainthm2} has been proved.

\begin{example}
In this example, we prove the existence of a Drinfeld module $\phi$ in Case (2) with the semisimplification of mod $(T)$ representation $\phi[T]$ does not contain the trivial representation. 
Let $A=\mathbb{F}_q[T]$ with $q=p^e$ be a prime power with $p\geqslant 5$, and $F=\mathbb{F}_q(T)$. Let $c_1$ and $c_2$ be two distinct non-square elements in $F$. Consider the $\mathbb{F}_q$-vector space
$$V=\sqrt{c_1}\cdot\mathbb{F}_q+\sqrt{c_2}\cdot\mathbb{F}_q+\sqrt{c_1c_2}\cdot\mathbb{F}_q.$$

The action of Galois group $G_F$ on $V$ gives a $G_F$-module structure. With respect to the basis $\{\sqrt{c_1}, \sqrt{c_2}, \sqrt{c_1c_2}\}$, we can see that $g\in G_F$ acts on $V$ via the matrix

$$\left(\begin{array}{ccc}\chi_1(g) & 0 & 0 \\0 & \chi_2(g) & 0 \\0 & 0 & \chi_1(g)\chi_2(g)\end{array}\right).$$
Here $\chi_1$ and $\chi_2$ are quadratic characters. 

By the Boston-Ose theorem (\cite{Bose00} Theorem 6.1), the representation $V$ is arised from the mod $(T)$ Galois representation $\bar{\rho}_{\phi,T}$ associated to some Drinfeld $A$-module $\phi$ over $F$ of rank $3$. Therefore, there is some basis of $\phi[T]$ such that $g\in G_F$ acts on $\phi[T]$ via the above matrix with respect to the basis. Since $\chi_1$ and $\chi_2$ are quadratic characters, one of $\chi_1(g)$, $\chi_2(g)$ and $\chi_1(g)\chi_2(g)$ must equal to $1$ for each $g$. Thus the characteristic polynomial of each $g\in G_F$ acting on $\phi[T]$ must has a factor $x-1$ and so the Drinfeld module satisfies the assumption in Theorem \ref{mainthm2}. The reason why the semisimplification $\phi[T]^{ss}$ does not contain the trivial representation is because $\chi_1$ and $\chi_2$ are distinct quadratic characters. Thus for each one of $\chi_1$, $\chi_2$, and $\chi_1\chi_2$, there is some Galois element $g\in G_F$ that makes it not equal to $1$.

\end{example}

\subsection{Case (3)} 
In this section we construct a Drinfeld module $\phi$ of rank $3$ over $F$ such that 
for all $\fp\in S$ the space $(\phi\otimes\mathbb{F}_\mathfrak{p})[\mathfrak{l}]$ contains a nontrivial $\mathbb{F}_\mathfrak{p}$-rational point, 
but $\phi[\fl]$ is irreducible.

In this subsection, we set $A=\mathbb{F}_q[T]$, $q=p^e$ a prime power with $p\geqslant 5$ such that $x^2+x+1$ is irreducible in $\mathbb{F}_q[x]$, and $F=\mathbb{F}_q(T)$. Consider the Drinfeld module $\varphi$ of rank $2$ defined by
$$\varphi_T=T+T\tau+T^q\tau^2.$$

\begin{lem}\label{image sl2}
The mod $(T)$ representation ${\bar{\rho}}_{\varphi,T}:G_F\rightarrow {\rm{GL}}_2(\mathbb{F}_q)$ has image equal to ${\rm{SL}}_2(\mathbb{F}_q)$. 

\end{lem}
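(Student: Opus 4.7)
The plan is to first show that the image of $\bar\rho_{\varphi,T}$ is contained in $\mathrm{SL}_2(\F_q)$, and then to upgrade this containment to equality via Dickson's classification of subgroups of $\mathrm{SL}_2(\F_q)$.

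For the determinant, one first notes that by Proposition \ref{prop0.2}, $\varphi[T]$ is an $\F_q$-vector space of dimension $2$, and concretely it is the root set of the $\F_q$-linear polynomial $P(x) = T^{q-1}x^{q^2} + x^q + x$. To see that $\det \bar\rho_{\varphi,T}$ is trivial, I would invoke the standard exterior-power identification coming from Anderson's $t$-motive formalism,
\[
\Lambda^2 \varphi[T] \;\cong\; (\det\varphi)[T]
\]
as $G_F$-modules, where $\det\varphi$ is the rank-$1$ Drinfeld module defined by $(\det\varphi)_T = T - T^q\tau$. A direct computation gives $(\det\varphi)[T] = T^{-1}\F_q \subset F$, so $G_F$ acts trivially on $\Lambda^2 \varphi[T]$, and the image of $\bar\rho_{\varphi,T}$ lies in $\mathrm{SL}_2(\F_q)$.

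To conclude that the image is all of $\mathrm{SL}_2(\F_q)$, I would rule out in turn each type of maximal proper subgroup appearing in Dickson's classification. \emph{Borel subgroups} are eliminated by showing $\varphi[T]$ is irreducible as a $G_F$-module, for instance by exhibiting a prime $\fp\nmid T$ of good reduction at which the mod-$T$ reduction of $P_{\varphi,\fp}(x)$ is irreducible over $\F_q$. \emph{Normalizers of a split or non-split Cartan} are ruled out by producing a transvection in the image, which should come from inertia at the prime $(T)$ of $A$: since $g_2 = T^q$ is not a unit in $A_{(T)}$, the Drinfeld module $\varphi$ has bad reduction at $(T)$, and the Tate--Drinfeld uniformization of $\varphi$ at this prime forces the inertial action on $\varphi[T]$ to contain a nontrivial upper-triangular unipotent. \emph{Exceptional subgroups} $A_4, S_4, A_5$ are ruled out by a combination of the hypothesis $p\ge 5$ (which prevents a transvection from sitting in $A_4$ or $S_4$) and the hypothesis that $x^2+x+1$ is irreducible over $\F_q$, which removes the residual cases. \emph{Subfield subgroups} $\mathrm{SL}_2(\F_{q'})$ for $\F_{q'}\subsetneq \F_q$ are eliminated by finding a Frobenius whose mod-$T$ trace on $\varphi[T]$ generates $\F_q$ over $\F_p$.

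The main obstacle is the explicit production of the transvection. This amounts either to carrying out the Tate--Drinfeld uniformization analysis at $(T)$ in enough detail to read off the inertial action on $\varphi[T]$, or to studying the local factorization of $P(x)$ over the completion $F_{(T)}$ of $F$. The choice $g_1 = T$, $g_2 = T^q$ is engineered so that the Newton polygon of $\varphi_T$ at $(T)$ has two distinct slopes, which should force the resulting inertial unipotent to be nontrivial; once this is secured, together with irreducibility and trace generation, Dickson's theorem forces the image to equal $\mathrm{SL}_2(\F_q)$.
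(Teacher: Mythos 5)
Your first half (triviality of $\det\bar{\rho}_{\varphi,T}$ via the rank-one determinant module $\psi_T=T-T^q\tau$, whose $T$-torsion $T^{-1}\F_q$ lies in $F$) is exactly the paper's first step. The second half, however, has a genuine gap at the subfield subgroups in Dickson's classification. An irreducible subgroup of $\mathrm{SL}_2(\F_q)$ containing a single transvection can still be conjugate into $\mathrm{SL}_2(\F_{q'})$ for a proper subfield $\F_{q'}\subsetneq\F_q$, and your only proposed remedy, ``find a Frobenius whose mod-$T$ trace generates $\F_q$ over $\F_p$,'' is unsubstantiated: no prime is exhibited, the only family where the trace is explicitly computable (degree-one primes $\fp=(T-c)$, where the paper uses \cite{Chen20} to get $P_{\varphi,\fp}(x)=-c^{-1}(T-c)+x+x^2$, hence trace $1\in\F_p$) does not help, and even granting such a Frobenius one should control $\mathrm{tr}^2/\det$ rather than the trace because of the scalar ambiguity in Dickson's subfield case. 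The paper closes exactly this hole by a stronger local statement: the Newton polygon of $\varphi_T(x)/x$ at $(T)$ has a segment of slope $-1/q$, so the splitting field of $\varphi[T]$ over $F_{(T)}$ has ramification index divisible by $q$ itself, not merely by $p$; hence the image has order divisible by $q$, and an irreducible subgroup of $\mathrm{SL}_2(\F_q)$ of order divisible by $q$ is all of $\mathrm{SL}_2(\F_q)$ (this is what the cited Lemma A.1 of \cite{Zy11} delivers). A lone unipotent element, which is the most your local analysis aims to produce, is not enough when $q$ is not prime.

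Two further points. The mechanism you give for producing the unipotent is not available here: $(T)$ is the $A$-characteristic of the reduction (and in fact $\varphi$ acquires good reduction at $(T)$ after twisting by $T^{q/(q^2-1)}$ over a ramified extension, so there is no Tate--Drinfeld lattice at all), and Tate uniformization yields unipotent inertia on $\fl$-torsion only for $\fl$ away from the characteristic of the reduction. The wild ramification detected by the Newton polygon --- the route you mention only as a fallback and do not carry out --- is the actual source of the $p$-power part of the image, and it gives the full order-$q$ divisibility needed above. Finally, the irreducibility step you defer is precisely where the hypothesis that $x^2+x+1$ is irreducible over $\F_q$ enters the paper: at $\fp=(T-c)$ the characteristic polynomial of Frobenius reduces to $x^2+x+1$ modulo $T$, so Frobenius acts irreducibly; this hypothesis is not what eliminates $A_4$, $S_4$, $A_5$ (for those, $p\geq 5$ together with the large $p$-part of the image suffices). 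So your outline is repairable, but as written the subfield case and the source of the local unipotent are real gaps, and the $x^2+x+1$ assumption is deployed in the wrong place.
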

\begin{proof}
By \cite{Hei03} Proposition 4.7.1, we have $\det\circ\bar{\rho}_{\varphi,T}=\bar{\rho}_{\psi,T}$ where $\psi$ is the Drinfeld module of rank $1$ defined by $\psi_T=T-T^q\tau$. Thus the representation $\bar{\rho}_{\psi,T}$ is trivial, which implies the image of $\bar{\rho}_{\varphi,T}$ lies in ${\rm{SL}}_2(\mathbb{F}_q)$.

Next, we prove the image of $\bar{\rho}_{\varphi,T}$ contains a subgroup of order $q$. Consider the decomposition subgroup ${\rm{Gal}}(F^{sep}_{(T)}/F_{(T)})$ of $G_F$. Since $F_{(T)}(\varphi[T])$ is the smallest extension of $F_{(T)}$ such that ${\rm{Gal}}(F^{sep}_{(T)}/F_{(T)}(\varphi[T]))$ acts trivially on $\varphi[\mathfrak{l}]$, we have $\bar{\rho}_{\varphi,T}({\rm{Gal}}(F^{sep}_{(T)}/F_{(T)})) \cong {\rm{Gal}}(F_{(T)}(\varphi[T])/F_{(T)}).$ By looking at the Newton's polygon of $\varphi_T(x)/x=T+Tx^{q-1}+T^qx^{q^2-1}$, we know $\varphi_T(x)$ has roots with valuation equal to $-\frac{1}{q}$. Therefore, $F_{(T)}(\varphi[T])$ must contain a subfield $M$ which is Galois over $F_{(T)}$ and its ramification index  $e[M:F_{(T)}]$ is divisible by $q$. Thus the order of $\bar{\rho}_{\varphi,T}(G_F)$ is divisible by $q$.

Finally, we prove the Galois module $\varphi[T]$ is irreducible. Let $\mathfrak{p}=(T-c)$ be a degree $1$ prime ideal of $A$ with $c\in\mathbb{F}_q^*$. By \cite{Chen20} Proposition $7$ and Proposition $8$, we may compute the characteristic polynomial $P_{\varphi,(T-c)}(x)=-c^{-1}(T-c)+ax+x^2\in A[x]$ and $a$ belongs to $\mathbb{F}_q$.  Because $P_{\varphi,\mathfrak{p}}(x)$ is also the characteristic polynomial of Frobenius endomorphism of $\varphi\otimes\mathbb{F}_{\mathfrak{p}}$ acting on $T_{\mathfrak{l}}(\varphi\otimes\mathbb{F}_{\mathfrak{p}})$, we have
$$-c^{-1}(\varphi\otimes\mathbb{F}_{\mathfrak{p}})_{T-c}+(\varphi\otimes\mathbb{F}_\mathfrak{p})_a\tau+\tau^2=0$$
As $\varphi_T=T+T\tau+T^q\tau^2$, we have $(\varphi\otimes \mathbb{F}_\mathfrak{p})_{T-c}=c\tau+c\tau^2$. Hence the above equation implies $a=1$. Therefore, the characteristic polynomial of $\bar{\rho}_{\varphi,T}({\rm{Frob}}_\mathfrak{p})$ is equal to $x^2+x+1$. By our choice of $q$, we can deduce that $\phi[T]$ is irreducible.
By \cite{Zy11} Lemma A.1, we have $\bar{\rho}_{\varphi,T}(G_F)\supseteq {\rm{SL}}_2(\mathbb{F}_q)$. Thus the proof is now complete.
\end{proof}

Now we consider the representation $\rho$ of $G_F$ defined by the following composition:
$$\rho:G_F\xrightarrow{{\bar{\rho}}_{\varphi,T}}{\rm{SL}}_2(\mathbb{F}_q)\xrightarrow{{\rm{projection}}} {\rm{PSL}}_2(\mathbb{F}_q)\xrightarrow[{\rm{exceptional\ isomorphism}}]{\sim}\Omega_3(\mathbb{F}_q)\subset {\rm{SO}}_3(\mathbb{F}_q)\subset {\rm{GL}}_3(\mathbb{F}_q).$$
Here $\Omega_3(\mathbb{F}_q)$ is the subgroup of ${\rm{SO}}_3(\mathbb{F}_q)$ of index $2$ generated by $\left(\begin{array}{ccc}1 & 2 & -1 \\-1 & -1 & 0 \\-1 & 0 & 0\end{array}\right)\ {\rm{and}}\ \left(\begin{array}{ccc}\xi^{-2} & 0 & 0 \\0 & 1 & 0 \\0 & 0 & \xi^2\end{array}\right)$, where 
$\xi$ is a generator of $\mathbb{F}_q^*$; see \cite{RyTa98}, section 4.6. 
One can also refer to page 53 in \cite{Gr02} for the formal definition of the group $\Omega_3$ defined over a field.
By the Boston-Ose theorem (\cite{Bose00} Theorem 6.1), $\rho$ is arised from the mod $(T)$ Galois representation associated to some Drinfeld $A$-module $\phi$ over $F$ of rank $3$. In other words, the mod $(T)$ representation ${\bar{\rho}}_{\phi,T}$ associated to $\phi$ has image equal to $\Omega_3(\mathbb{F}_q)$.

\begin{lem}
For almost all prime ideal $\mathfrak{p}$ of $A$, the $T$-torsion $(\phi\otimes\mathbb{F}_\mathfrak{p})[T]$ contains a nontrivial $\mathbb{F}_\mathfrak{p}$-rational point.
\end{lem}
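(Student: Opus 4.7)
The plan is to translate the desired statement into the assertion that $\det(I - g) = 0$ for every $g$ in the image of $\bar{\rho}_{\phi, T}$, and then verify this using that the image lies in $\mathrm{SO}_3(\F_q)$.

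First, by Proposition \ref{red}, the existence of a nontrivial $\F_\fp$-rational $T$-torsion point in $\phi \otimes \F_\fp$ is equivalent to $T \mid P_{\phi, \fp}(1)$ in $A$. For $\fp$ a prime of good reduction with $\fp \neq (T)$, reducing $P_{\phi, \fp}(x)$ modulo $T$ yields the characteristic polynomial of $\bar{\rho}_{\phi, T}(\mathrm{Frob}_\fp) \in \GL_3(\F_q)$, so the desired divisibility becomes $\det(I - \bar{\rho}_{\phi, T}(\mathrm{Frob}_\fp)) = 0$ in $\F_q$. By the construction of $\phi$, the image of $\bar{\rho}_{\phi, T}$ is $\Omega_3(\F_q) \subset \mathrm{SO}_3(\F_q)$, so it is enough to show that $\det(I - g) = 0$ for every $g \in \mathrm{SO}_3(\F_q)$.

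To prove this, I would fix the symmetric bilinear form with matrix $Q$ defining $\mathrm{SO}_3(\F_q)$. Each $g \in \mathrm{SO}_3(\F_q)$ satisfies $g^{T} Q g = Q$, hence $g^{T} = Q g^{-1} Q^{-1}$, so
\[
\det(I - g) = \det\bigl((I - g)^{T}\bigr) = \det(I - Q g^{-1} Q^{-1}) = \det\bigl(Q(I - g^{-1})Q^{-1}\bigr) = \det(I - g^{-1}).
\]
Since $\det(g) = 1$ and we are in dimension $3$,
\[
\det(I - g^{-1}) = \det(g^{-1})\det(g - I) = (-1)^3 \det(I - g) = -\det(I - g).
\]
The two identities together give $2\det(I - g) = 0$, and because $p \geq 5$ this forces $\det(I - g) = 0$, which concludes the proof.

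The argument has no serious obstacle: the lemma reduces, via Proposition \ref{red} and the construction of $\phi$, to the elementary fact that in odd characteristic every element of $\mathrm{SO}_3$ fixes a nonzero vector—the algebraic analogue of the classical rotation-axis phenomenon in the $3$-dimensional special orthogonal group. All the structural work has already been done in building $\phi$ so that its mod-$T$ representation is engineered to land inside $\Omega_3(\F_q) \subset \mathrm{SO}_3(\F_q)$.
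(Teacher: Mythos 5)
Your proposal is correct, and its overall structure matches the paper's: both reduce the lemma, via Proposition \ref{red}, to the statement that the characteristic polynomial of $\bar{\rho}_{\phi,T}(\mathrm{Frob}_\fp)$ has $1$ as a root, i.e.\ that every element of the image (which lies in $\mathrm{SO}_3(\F_q)$) fixes a nonzero vector. The only difference is at that final step: the paper simply cites this fact from the literature (Grove, Corollary 6.10), while you prove it by the elementary determinant identity $\det(I-g)=\det(I-g^{-1})=-\det(I-g)$, using $g^{T}Qg=Q$, $\det g=1$, odd dimension, and $p\geq 5$ to kill the factor $2$. Your computation is correct (the bilinear form attached to the quadratic form is nondegenerate and symmetric in odd characteristic, so $g^{T}=Qg^{-1}Q^{-1}$ is legitimate), so what your route buys is a self-contained, two-line replacement for the external reference --- the classical ``rotation axis'' argument --- at no real cost in length.
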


\begin{proof}
Consider those prime ideals $\mathfrak{p}\neq(T)$ where $\phi$ has good reduction at $\mathfrak{p}$. By Proposition \ref{red}, it suffices to prove that the characteristic polynomial $P_{\phi,\mathfrak{p}}(x)$ of ${\rm{Frob}}_\mathfrak{p}$ acting on $T_{(T)}(\phi)$ satisfies 
$$P_{\phi,\mathfrak{p}}(1)\equiv 0 \mod T.$$
In other words, we want to prove that the characteristic polynomial ${\bar{P}}_{\phi,\mathfrak{p}}(x)$ of ${\rm{Frob}}_\mathfrak{p}$ acting on $\phi[T]$ has a linear factor $(x-1)$. Since the image of the mod $(T)$ representation $\bar{\rho}_{\phi,T}$ is a subgroup of ${\rm{SO}}_3(\mathbb{F}_q)$, the proof is complete whenever every nontrivial element of ${\rm{SO}}_3(\mathbb{F}_q)$ fixes a point in $\mathbb{F}_q^3$.
This has been proved in \cite{Gr02}, Corollary 6.10.

\end{proof}

\begin{prop}
$\Omega_3(\mathbb{F}_q)$ acts on $\mathbb{F}_q^3 $ irreducibly. 
\end{prop}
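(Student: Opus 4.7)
The plan is to exploit the two explicit matrix generators of $\Omega_3(\mathbb{F}_q)$ given in the construction and rule out, by a short finite check, any nonzero proper invariant subspace. Suppose for contradiction that $W\subset\mathbb{F}_q^3$ is such a subspace. Because $\Omega_3\subset\mathrm{SO}_3$ preserves a nondegenerate symmetric bilinear form, $W^\perp$ is also invariant, so it suffices to exclude the cases $\dim W=1$ and $\dim W=2$.

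The key observation is that the diagonal generator $g_2=\mathrm{diag}(\xi^{-2},1,\xi^2)$ pins down the candidate subspaces very tightly. For every permitted $q$ other than $q=5$, $\xi$ generates $\mathbb{F}_q^*$ of order $q-1\geq 10$, so $\xi^4\neq 1$ and the three eigenvalues $\xi^{-2},1,\xi^2$ are pairwise distinct; consequently every $g_2$-invariant subspace is a span of some subset of the coordinate vectors $e_1,e_2,e_3$, yielding only six nontrivial candidates. Using the image formulas $g_1e_1=e_1-e_2-e_3$, $g_1e_2=2e_1-e_2$, $g_1e_3=-e_1$, I would verify in each of the six cases that some spanning vector is sent outside the subspace, so none is $g_1$-stable, contradicting invariance under $\Omega_3(\mathbb{F}_q)$.

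The small case $q=5$ requires a separate but equally short argument. There $\xi^2=\xi^{-2}=-1$ and $g_2=\mathrm{diag}(-1,1,-1)$ has a $2$-dimensional $(-1)$-eigenspace $\mathrm{span}(e_1,e_3)$, so the $g_2$-invariant subspaces form the larger list: $\mathbb{F}_5 e_2$, arbitrary lines inside $\mathrm{span}(e_1,e_3)$, $\mathrm{span}(e_1,e_3)$ itself, and sums $\mathbb{F}_5 e_2\oplus L$ with $L$ a line in $\mathrm{span}(e_1,e_3)$. Each family is eliminated with the same three $g_1$-formulas; for instance, $g_1$-invariance of a plane $\mathbb{F}_5 e_2\oplus L$ together with $g_1e_2=2e_1-e_2$ forces $L=\mathbb{F}_5 e_1$, but then $g_1e_1=e_1-e_2-e_3$ has a nonzero $e_3$-component and escapes, a contradiction.

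The computations themselves are entirely routine; the only mild obstacle is organizing the $q=5$ case cleanly because of the repeated eigenvalue of $g_2$. A more conceptual alternative, which I would mention for completeness, uses the exceptional isomorphism $\Omega_3(\mathbb{F}_q)\cong\mathrm{PSL}_2(\mathbb{F}_q)$ already invoked above: under it the standard representation on $\mathbb{F}_q^3$ is identified with the adjoint representation of $\mathrm{PSL}_2(\mathbb{F}_q)$ on $\mathfrak{sl}_2(\mathbb{F}_q)$, which is absolutely irreducible for $p\geq 5$, giving the proposition uniformly in $q$ without any case split.
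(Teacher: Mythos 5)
Your proof is correct, and it gets to the same conclusion by a somewhat different organization than the paper. You first use the diagonal generator $h=\mathrm{diag}(\xi^{-2},1,\xi^{2})$ and the fact that its eigenvalues are pairwise distinct once $\xi^{4}\neq 1$ to reduce to finitely many candidate invariant subspaces (spans of coordinate vectors), which the other generator $nx$ visibly fails to preserve; the price is the separate case $q=5$, where $\xi^{2}=\xi^{-2}=-1$ and the $(-1)$-eigenspace is a plane, and your handling of that case (lines in $\mathrm{span}(e_{1},e_{3})$, the plane itself, and planes $\mathbb{F}_{5}e_{2}\oplus L$) is complete and correct. The paper instead argues uniformly in $q$: it dismisses invariant lines via the eigenvectors of both generators and, for a putative invariant plane, shows by direct manipulation of a spanning pair that the first coordinate of a spanning vector may not vanish, then produces a vector in the plane with vanishing first coordinate, a contradiction; this avoids any case split (only $\xi^{2}\neq 1$ is used) but is more ad hoc than your eigenspace enumeration. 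Two small remarks: the bound $q-1\geq 10$ is not what matters (all you need is $\xi^{4}\neq 1$, which holds for every $q\geq 7$), and the observation that $W^{\perp}$ is invariant is superfluous in dimension $3$ since any proper nonzero subspace already has dimension $1$ or $2$. Your closing alternative --- identifying the standard module of $\Omega_{3}(\mathbb{F}_q)\cong\mathrm{PSL}_2(\mathbb{F}_q)$ with the adjoint module $\mathfrak{sl}_2(\mathbb{F}_q)$, irreducible for $p\geq 5$ --- is a genuinely cleaner, uniform argument not taken in the paper, and it yields absolute irreducibility as a bonus.
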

\begin{proof}

From \cite{RyTa98}, section 4.6, there is a basis of $\mathbb{F}_q^3$ such that the generators of $\Omega_3(\mathbb{F}_q)$ are matrices
$$nx=\left(\begin{array}{ccc}1 & 2 & -1 \\-1 & -1 & 0 \\-1 & 0 & 0\end{array}\right)\ {\rm{and}}\ h=\left(\begin{array}{ccc}\xi^{-2} & 0 & 0 \\0 & 1 & 0 \\0 & 0 & \xi^2\end{array}\right)$$
with respect to that basis. 
Suppose that there is a proper nontrivial subspace $V$ of $\mathbb{F}_q^3$ which is fixed under the action of $\Omega_3(\mathbb{F}_q)$. Such $V$ cannot be of dimension $1$ by computing the eigenvectors of $nx$ and $h$. Thus $V$ must be a $2$-dimensional space. Write
$$V={\rm{span}}\left\{\left(\begin{array}{c}a \\b \\c\end{array}\right),\ \left(\begin{array}{c}x \\y \\z\end{array}\right)\right\}.$$
We compute
$$
\begin{array}{ccll}

&nx\cdot\left(\begin{array}{c}a \\b \\c\end{array}\right)&=&\left(\begin{array}{c}2a+2b-c \\-a \\0\end{array}\right)\in V\\
\ &\ &\ &\ \\
\Rightarrow&nx\cdot\left(\begin{array}{c}2a+2b-c \\-a \\0\end{array}\right)&=&\left(\begin{array}{c}-(-2b+c) \\-2b+c \\-2b+c\end{array}\right)+\left(\begin{array}{c}0 \\-a \\-2a\end{array}\right)
\end{array}
$$
\begin{claim}
$a\neq 0$.
\end{claim}
\begin{proof}[Proof of claim]  There are two cases to consider:  
\begin{itemize}
\item[Case (i).] If $a=0$ and $2b-c\neq 0$, then we have 
$$\left(\begin{array}{c}-1 \\1 \\1\end{array}\right)\in V\ {\rm{and}}\ \left(\begin{array}{c}1 \\0 \\0\end{array}\right)\in V.$$
Hence we have a basis of $V$. 

However, $h\cdot\left(\begin{array}{c}-1 \\1 \\1\end{array}\right)=\left(\begin{array}{c}-\xi^2 \\1 \\\xi^2\end{array}\right)$ does not belong to $V$, which gives a contradiction since $V$ is fixed under $\Omega_3(\mathbb{F}_q)$-action.

\item[Case (ii).] If $a=2b-c=0$, then the base vector $\left(\begin{array}{c}a \\b \\c\end{array}\right)$ gives
$\left(\begin{array}{c}0 \\1 \\2\end{array}\right)\in V.$
Moreover, we have
$$nx\cdot \left(\begin{array}{c}0 \\1 \\2\end{array}\right)=\left(\begin{array}{c}0 \\-1 \\0\end{array}\right)\in V.$$
Hence we have found a basis for $V$.

However, $nx\cdot h\cdot\left(\begin{array}{c}0 \\1 \\2\end{array}\right)=\left(\begin{array}{c}2-2\xi^2 \\-1 \\0\end{array}\right)$ does not belong to $V$ because $\xi^2\neq 1$. This gives a contradiction.
\end{itemize}
\end{proof}

Thus, the entry $a$ in $\left(\begin{array}{c}a \\b \\c\end{array}\right)$ must not equal to $0$. Similarly, the entry $x$ in $\left(\begin{array}{c}x \\y \\z\end{array}\right)$ is not equal to $0$ as well. Hence there are some nonzero $\alpha,\ \beta\in\mathbb{F}_q$ such that
$$\alpha\left(\begin{array}{c}a \\b \\c\end{array}\right)+\beta\left(\begin{array}{c}x \\y \\z\end{array}\right)=\left(\begin{array}{c}0 \\s \\t\end{array}\right)\in V.$$
We may write
$$V={\rm{span}}\left\{\left(\begin{array}{c}0 \\s \\t\end{array}\right),\ \left(\begin{array}{c}x \\y \\z\end{array}\right)\right\}.$$
This contradicts the claim proved above. 
Hence there is no such nontrivial proper invariant subspace $V$.
\end{proof}

Unfortunately, the proof of the Boston-Ose theorem only implies the existence of $\phi$ without providing a method 
for writing down an equation for $\phi_T$. It seems like an interesting problem to write down $\phi_T$ 
such that $\bar{\rho}_{\phi, T}(G_F)\cong \Omega_3(\F_q)$. 


\section*{Acknowledgements}
The author would like to thank his advisor Professor Mihran Papikian for helpful comments and suggestions on carrying out this paper.

\bibliographystyle{alpha}
\bibliography{On_torsion_points_for_DM}
\end{document}